\newcommand{\BlackBoxes}{\global\overfullrule5pt}
\newcommand{\R}{\mathbb{R}} 
\newcommand{\Q}{\mathbb{Q}} 
\newcommand{\dx}{\mathrm{d}}
\newcommand{\PP}{\mathbb{P}}
\newcommand{\EE}{\mathbb{E}}
\newcommand{\E}{\mathbb{E}}
\newcommand{\st}{\text{s.t. }}
\newcommand{\AMF}{\mathcal{A}^{\text{MF}}}
\newtheorem{theorem}{Theorem}
\theoremstyle{definition}
\newtheorem{example}[theorem]{Example}
\newtheorem{remark}[theorem]{Remark}
\newtheorem{definition}[theorem]{Definition}
\numberwithin{equation}{section} \numberwithin{theorem}{section}
\def\0{\kern0pt\-\nobreak\hskip0pt\relax}
\def\makeoverbar#1#2#3#4#5#6#7{ \setbox0=\hbox{$\m@th#2\mkern#5mu{{}#3{}}\mkern#6mu$} \setbox1=\null \dimen@=#4\fontdimen8#13 \dimen@=3.5\dimen@
\advance\dimen@ by \ht0 \dimen@=-#7\dimen@ \advance\dimen@ by \wd0
\ht1=\ht0 \dp1=\dp0 \wd1=\dimen@
\dimen@=\fontdimen8#13 \fontdimen8#13=#4\fontdimen8#13
\rlap{\hbox to \wd0{$\m@th\hss#2{\overline{\box1}}\mkern#5mu$}}
\fontdimen8#13=\dimen@}
\def\mylabel#1#2{{\def\@currentlabel{#2}\label{#1}}}
\begin{document}
\title[  ]{Nash equilibria for relative investors via no-arbitrage arguments}
\author[N. \smash{B\"auerle}]{Nicole B\"auerle}
\address[N. B\"auerle]{Department of Mathematics,
Karlsruhe Institute of Technology (KIT), D-76128 Karlsruhe, Germany}

\email{\href{mailto:nicole.baeuerle@kit.edu}{nicole.baeuerle@kit.edu}}

\author[T. \smash{G\"oll}]{Tamara G\"oll}
\address[T. G\"oll]{Department of Mathematics,
Karlsruhe Institute of Technology (KIT), D-76128 Karls\-ruhe, Germany}

\email{\href{mailto:tamara.goell@kit.edu} {tamara.goell@kit.edu}}


\begin{abstract}
Within a common arbitrage-free semimartingale financial market we consider the problem of determining all Nash equilibrium investment strategies for $n$ agents who try to maximize the expected utility of their relative wealth. The utility function can be rather general here. Exploiting the linearity of the stochastic integral and making use of the classical pricing theory we are able to express all Nash equilibrium investment strategies in terms of the optimal strategies for the classical one agent expected utility problems. The corresponding mean field problem is solved in the same way. We give four applications of specific financial markets and compare our results with those given in the literature.
\end{abstract}
\maketitle


\makeatletter \providecommand\@dotsep{5} \makeatother



\vspace{0.5cm}
\begin{minipage}{14cm}
{\small
\begin{description}
\item[\rm \textsc{ Key words}]
{\small Portfolio optimization; Semimartingale market; Nash equilibrium; Relative investor}
\item[\rm \textsc{AMS subject classifications}] 
{\small 91A35, 91A16, 91G20}

\end{description}
}
\end{minipage}

\section{Introduction}
In this paper we consider an $n$-player investment problem where the individual agents try to maximize their expected utility from relative wealth measured against the performance of the other agents. Problems like this have been motivated for example in \cite{brown2001careers,kempf2008tournaments} by competition between agents. In \cite{basak2015competition} a continuous-time model for two agents is considered with stocks following a geometric Brownian motion.  Power utilities are used to evaluate the relative investment performance where relative wealth is interpreted as the ratio of the wealth of the two agents. The authors derive the unique pure-strategy Nash equilibrium. \cite{espinosa2015optimal} also use stocks following a geometric Brownian motion and model the relative concerns of $n$ agents using the arithmetic mean of the wealth of all $n$ agents. They derive existence and uniqueness results for Nash equilibria under additional constraints on the admissible strategies.  

The problem has been taken up and extended in \cite{lacker_zariphopoulou_n-agent_nash}. There $n$ agents are considered who can invest into their own financial markets which are correlated Black-Scholes markets. In case of a power utility (CRRA) relative wealth is again measured as the ratio of own wealth against weighted competitors' wealth, where in case of an exponential utility (CARA) relative wealth is measured as the difference between own wealth and weighted competitors' wealth. A Nash equilibrium in constant strategies is derived in the different settings and finally the mean field problem is considered where the number of agents tends to infinity. 

Later, a number of papers appeared which generalized the problem in various directions. For example \cite{lacker2020many} consider the problem with consumption-investment where both terminal wealth and consumption are measured by relative wealth for  CRRA utilities. In \cite{dos2019forward,reis2020forward} the process of forward utilities of the finite player game and the forward utilities of the mean field game are considered with and without consumption. Forward utilities are of CARA and CRRA type, respectively. Generalizations of the financial market can be found in \cite{kraft2020dynamic,fu2020mean,hu2021n}. The first paper \cite{kraft2020dynamic} considers a stochastic volatility model (CIR) and CRRA utility for two players. \cite{fu2020mean} treat a financial market based on Brownian motions with stochastic coefficients. They consider a CARA utility and Nash equilibria are characterized as solutions of FBSDEs. Finally, \cite{hu2021n} deal with a common It\^{o}-diffusion market for all agents which may be incomplete in case of CARA utilities or complete in case of random risk tolerance coefficients. Further papers among others are \cite{deng2020relative} where the problem with partial information and heterogeneous priors is considered and \cite{whitmeyer2019relative} which discusses more economical questions like the structure of equilibria and the effect of additional agents.

In this paper we start with a very general arbitrage-free semimartingale financial market which comprises all common market models treated so far and extends the literature in this area by allowing for jumps in the stock price processes. It is important that we assume that every agent has the same financial market as investment opportunity and the relative wealth is measured by the difference between own wealth and  weighted sum of competitors' wealth. We make almost no assumption about the utility function. In particular we are also able to consider e.g.\ Cumulative Prospect Theory (CPT) which has not been done before. Using classical pricing theory and exploiting the fact that the stochastic integral is linear in the trading strategy we are able to characterize {\em all } Nash equilibria in our model in terms of the optimal strategy for the classical one agent utility maximization problem. More precisely, once the classical problem has been solved we derive the Nash equilibria as the unique solution of a system of  linear equations. This is because we can split the initial wealth of each agent into a part which is used to hedge the competitors' wealth and a part which is used for own optimization. Uniqueness is an issue which has not been solved in all of the previous literature and posed as an open question in \cite{lacker_zariphopoulou_n-agent_nash}. Similar structural arguments can be found in \cite{bauerle2015complete}. The idea then carries over to the mean field problem. Using our method of proof it is also easy to see that the Nash equilibrium strategies coincide with Pareto-optimal strategies. 

The outline of our paper is as follows: In the next section we introduce the financial market and the optimization problem. In Section \ref{sec:Solution} we explain our solution approach and state the first main theorem about existence, uniqueness and form of the Nash equilibrium strategies. In Section \ref{sec:examples} we give four examples where we compute Nash equilibrium strategies in different settings. We present the solution in the classical terminal utility maximization setting in a general Lévy market, the Heston model and the Cox-Ross-Rubinstein market in discrete time and also consider the case of CPT utility, where the underlying financial market is chosen to be a Black-Scholes market with constant parameters. In the final section we motivate and solve the corresponding mean field problem. Besides some technical obstacles we essentially get the same results.

\section{Financial Market Model and Problem}\label{sec:market}
We consider the following general underlying financial market taken from \cite{vcerny2007structure,delbaen1996attainable}:

Let $(\Omega, \mathcal{F}, (\mathcal{F}_t), \mathbb{P})$ be a filtered probability space and $T>0$ be a finite time horizon. The underlying financial market consists of $d$ stocks and one riskless bond, each defined on the previously mentioned probability space. The price process of the $d$ stocks will be a non-negative c\`{a}dl\`{a}g  $L^2(\mathbb{P})$-semimartingale $S=(S_1(t),\ldots,S_d(t))_{t\in [0,T]}$, i.e. we assume that
\begin{equation}
\sup\{ \EE [(S_k(\tau))^2] : \tau \mbox{ is a }  (\mathcal{F}_t)-\mbox{stopping time, } k=1,\ldots,d \}<\infty.
\end{equation}
The price process  of the riskless bond is for simplicity assumed to be identical to $1$. We make the following assumption \cite{vcerny2007structure}:\\

{\em Assumption:}
There exists a probability measure $\mathbb{Q} \sim \mathbb{P}$ such that $\EE \left( \frac{\dx \mathbb{Q}}{\dx\mathbb{P}}\right)^2< \infty$ and $S$ is a $\mathbb{Q}$-$\sigma$-martingale. Such a $\mathbb{Q}$ is called $\sigma$-martingale measure (S$\sigma$MM) with square integrable density.
\\

Note that this assumption is equivalent to ruling out   free-lunch  \cite{vcerny2007structure}. In a next step we introduce a suitable set of trading strategies. Since the riskless bond is equal to 1 and we only consider self-financing strategies we can express the wealth process with the help of the investment in risky assets only. By $\varphi = (\varphi_1,\ldots,\varphi_d)$ we denote a $d$-dimensional stochastic process representing the trading strategy of some investor, where $\varphi_k(t)$ describes the number of shares of the $k$-th stock held at time $t\in [0,T]$.  We assume that $\varphi\in L(S)$, where $L(S)$ denotes the set of $(\mathcal{F}_t)$-predictable, $S$-integrable stochastic processes. The associated wealth process denoted by $(X^\varphi_t)_{t\in [0,T]}$ is given by 
\begin{equation} 
X^\varphi_t = x_0+ (\varphi \cdot S)_t, \mbox{ where } (\varphi \cdot S)_t=\sum_{k=1}^d \int_0^t\varphi_k(u) \mathrm{d}S_k(u) \label{eq sde price process}
\end{equation}
with initial  capital $x_0 \in \R$. 
A trading strategy $\varphi$ is called \textit{admissible}, if  it is in 
\begin{align*}\nonumber
 \mathcal{A} \coloneqq  &\, \Big\{\varphi \in L(S)\, |\, (\varphi\cdot S)_T \in L^2(\mathbb{P}), \; (\varphi \cdot S) Z^\mathbb{Q} \text{ is a $\mathbb{P}$-martingale for all S}\sigma\text{MM } \mathbb{Q} \\
 & \; \text{ with density process } Z^\mathbb{Q} \text{ and  } \frac{\dx \mathbb{Q}}{\dx \mathbb{P}} \in L^2(\mathbb{P})\Big\}.
\end{align*}



In the previously described setting, the time-zero price of any claim $X\in L^2(\mathbb{P})$ is given by 
\begin{equation}
 \mathbb{E}_{\mathbb{Q}}\left[{X} \right]  = \EE\left[Z^\mathbb{Q}_T X \right],
\end{equation}
for all S$\sigma$MM $\mathbb{Q}$ with $\frac{\dx \mathbb{Q}}{\dx \mathbb{P}} \in L^2(\mathbb{P})$. In particular for $X_T^\varphi$ from \eqref{eq sde price process} we obtain                  $\mathbb{E}_{\mathbb{Q}}\left[X_T^\varphi\right] = x_0.$   \\

Further we assume that there are $n$ investors trading in the previously described financial market. Each investor measures her preferences with respect to some  utility function $U_i:\mathcal{D}_i \rightarrow \R$ applied to the difference of her own terminal wealth and a weighted arithmetic mean of the other $n-1$ agents' wealth. This criterion has been used in \cite{lacker_zariphopoulou_n-agent_nash}. A utility function is here defined as follows.

\begin{definition}
Let $\mathcal{D}\in \{(0,\infty), [0,\infty), \R\}$. A function $U:\mathcal{D}\rightarrow \R$ is called \textit{ utility function} if $U$ is continuous and strictly increasing. 
\end{definition}

For convenience we extend all utility functions to functions on $\R$ by setting $U(x)=-\infty$ if $x\notin \mathcal{D}$. For given \textit{competition weights} $\theta_i \in [0,1]$ and initial capital $x_0^i \in \mathcal{D}_i$, investor $i$ aims to solve 
\begin{equation}
\begin{cases}
& \sup_{\varphi^i \in \mathcal{A}} \mathbb{E}\left[U_i \left( X_T^{i,\varphi^i} - \theta_i \bar{X}^{-i,\varphi}_T \right) \right],\\
\st & X^{i,\varphi^i}_T = x_0^i + (\varphi^i \cdot S)_T,
\end{cases}\quad i=1,\ldots,n, \label{eq optimization problem arithm}
\end{equation}
where $\bar{X}^{-i,\varphi}_T = \frac{1}{n}\sum_{j\neq i} X_T^{j,\varphi^j}$ and $\varphi^j, j\neq i,$ are fixed admissible strategies. For the later analysis it is a little bit more convenient to scale the sum by $n$ instead of $n-1$.

\begin{remark}
Note that we need to ensure that the investor is able to attain a terminal wealth $ X_T^{i,\varphi^i}$ such that $ X_T^{i,\varphi^i} - \theta_i \bar{X}^{-i,\varphi}_T \in \mathcal{D}_i$  $\mathbb{P}$-almost surely. Otherwise the problem is trivial. We will later see that this condition is satisfied when we choose the competition weight $\theta_i \in [0,1]$ under the constraint $x_0^i -  \frac{\theta_i}{n} \sum_{j \neq i}x_0^j \in \mathcal{D}_i$. Obviously, this constraint is only relevant if $\mathcal{D}_i \in \{ (0,\infty),[0,\infty)\}$. In this case, we need to make sure that $$x_0^i -  \frac{\theta_i}{n} \sum_{j \neq i}x_0^j >0\ (\geq 0)$$ is satisfied. If $\mathcal{D}_i = (0,\infty)$, this is equivalent to 
\begin{align*}
\left(1+\frac{\theta_i}{n} \right)x_0^i - \frac{\theta_i}{n}\sum_{j=1}^n x_0^j > 0  \Longleftrightarrow 
\frac{\frac{\theta_i}{n}}{1+\frac{\theta_i}{n}} < \frac{x_0^i}{\sum_{j=1}^n x_0^j} \Longleftrightarrow \frac{\theta_i}{n} < \frac{\alpha_i}{1-\alpha_i},
\end{align*}
where \begin{equation*}
\alpha_i := \frac{x_0^i}{\sum_{j=1}^n x_0^j}
\end{equation*}
describes the fraction of the collective initial capital originating from investor $i$. The case $\mathcal{D}_i = [0,\infty)$ follows analogously. Therefore the constraint $x_0^i -  \frac{\theta_i}{n} \sum_{j \neq i}x_0^j >0$ implies an upper bound on the choice of $\theta_i$, which is obviously increasing in the fraction $\alpha_i$.
Hence we obtain an upper bound on the competition weight which is increasing in terms of the $i$-th agent's initial capital and decreasing in terms of the other $n-1$ agents' initial capital. We can interpret this observation as follows: The more an investor contributes in the beginning, the more she may care about the other agents' investment behavior.
\end{remark} 

\begin{remark}\label{remark:discrete_vs_continuous}
The arguments presented in Section \ref{sec:Solution} also apply for financial markets in discrete time. In order to keep the setting as simple but also as general as possible, we decided to introduce a setting in continuous time that covers many important market models (see Section~\ref{sec:examples} for some examples). However, the arguments in the following Section do not depend at all on the underlying financial market, so that one could also have a general arbitrage-free financial market in discrete time in mind, e.g. \cite{pliska2005introduction}, Chapter 3. We explain this later in Example \ref{subsec:CRR}. 
\end{remark}

\section{Solution via Pricing Theory}\label{sec:Solution}
 
Next we explain how to solve the multi-objective optimization problem given by \eqref{eq optimization problem arithm} in the context of Nash equilibria. A Nash equilibrium for agents using general objective functions $J_i$ is defined as follows.

\begin{definition}\label{definition nash}
Let $J_i:\mathcal{A}^n \rightarrow \R$ be the objective function of agent $i$. A vector $\left(\varphi^{1,*},\ldots,\varphi^{n,*}\right)$ of strategies is called a \emph{Nash equilibrium}, if, for all admissible $\varphi^i \in \mathcal{A}$ and $i=1,\ldots,n$, 
\begin{equation}
J_i(\varphi^{1,*},\ldots,\varphi^{i,*},\ldots,\varphi^{n,*})\geq J_i(\varphi^{1,*},\ldots,\varphi^{i-1,*},\varphi^i,\varphi^{i+1,*},\ldots,\varphi^{n,*}). \label{Nash condition}
\end{equation}
\end{definition}

In the context of Nash equilibria each investor tries to maximize her own objective function while the strategies of the other investors are assumed to be fixed. This maximization results in the optimal strategy of agent $i$ in terms of the strategies of the other investors. 
The second step of the optimization process is a fixed-point problem in order to find the $n$-tuple of admissible strategies to satisfy each investors' preference determined in the first optimization step. 

Generally, one would proceed in the previously described way by fixing some investor $i$, fixing the other agents' strategies, maximizing the $i$-th objective function and solving the fixed-point problem afterwards. We choose a different approach to find a Nash equilibrium and discuss its uniqueness. 

At first, we consider the expression inside the utility function in \eqref{eq optimization problem arithm}. Since agent $i$ can only control her own strategy, the random variable $\theta_i \bar{X}^{-i,\varphi}_T$ can be understood as some fixed asset claim. An arbitrary strategy $\varphi$ can due to linearity always be decomposed into 
$$ X_T^\varphi = X_T^\varphi - X_T^{\varphi'} + X_T^{\varphi'} = X_T^{\varphi-\varphi'} + X_T^{\varphi'},  $$ 
for arbitrary $\varphi'$. Investor $i$ can without loss of generality invest some fraction of her initial capital in order to hedge the claim $\theta_i \bar{X}^{-i,\varphi}_T$. The remaining part of her initial capital can then be used to maximize her own terminal wealth. This idea leads to a much simpler method of determining Nash equilibria in the given context.

The first step is to determine the price of the claim $\theta_i \bar{X}^{-i,\varphi}_T$. Each investor $j$, where $j\neq i$, has some initial capital $x_0^j$. Hence the time zero price of $X_T^{j,\varphi^j}$ equals the initial capital $x_0^j$. By linearity, the price of the claim $\theta_i \bar{X}^{-i,\varphi}_T$ is simply given by 
$$ \theta_i\bar{x}_0^{-i}:= \frac{\theta_i}{n} \sum_{j\neq i } x_0^j.$$
Note that the time zero price is independent of the strategies $\varphi^j$, $j\neq i$, chosen by the other investors. Hence the maximization problem in the second step does not depend on the other $n-1$ agents' strategies.

In the second step, investor $i$ needs to solve a classical portfolio optimization problem using the utility function $U_i$ and the reduced initial capital $\tilde{x}_0^i := x_0^i - \theta_i \bar{x}_0^{-i}$. The portfolio optimization problem
\begin{equation}
\begin{cases}
&\sup\limits_{\psi^i \in \mathcal{A}} \mathbb{E}\left[U_i\left(Y^{i,\psi^i}_T\right) \right],\\
\text{s.t. } & Y_T^{i,\psi^i} = \tilde{x}_0^i+ (\psi^i \cdot  S)_T,\label{eq auxiliary problem nash}
\end{cases}
\end{equation}
can be solved using standard methods. We assume here that there exists an optimal strategy  $\psi^{i,*}\in \mathcal{A}$ for \eqref{eq auxiliary problem nash}. 

In the last step the Nash equilibria are determined using the linearity of the price process. By construction the process $(Y_t^{i,\psi^{i,*}})_{t\in [0,T]}$ can be written as 
\begin{equation}
Y_t^{i,\psi^{i,*}} = X_t^{i,\varphi^i} - \frac{\theta_i}{n} \sum_{j\neq i} X_t^{j,\varphi^j}\label{eq hedging variable}
\end{equation}
$\mathbb{P}$-almost surely for all $t\in [0,T]$. Then, using the linearity of the wealth (of course we assume that stock prices are not identical), the strategies  $\varphi^j \in \mathcal{A}$ are obtained from 
\begin{equation}
\psi_k^{i,*}(t) = \varphi_k^i(t) - \frac{\theta_i}{n} \sum_{j\neq i} \varphi^j_k(t),\quad  i=1,\ldots,n,\,  k=1,\ldots,d\label{eq sle}
\end{equation}
$\mathbb{P}$-almost surely for all $t\in [0,T]$. Hence the Nash equilibria can be determined as the solution to the system of linear equations defined by \eqref{eq sle} where $\psi_k^{i,*}$ are given. If \eqref{eq auxiliary problem nash} and \eqref{eq sle}  have a unique solution, the resulting Nash equilibrium is unique as well. The question of uniqueness has been posed as an open problem in \cite{lacker_zariphopoulou_n-agent_nash}. Our setting gives a partial answer to their question.

\begin{theorem}\label{thm:unique_NE}
If \eqref{eq auxiliary problem nash} has a unique (up to modifications) optimal portfolio strategy $\psi^{i,*}$ for all $i$, then there exists a unique Nash equilibrium for \eqref{eq optimization problem arithm} given by
\begin{equation}
\varphi_k^{i}(t) := \frac{n}{n+\theta_i} \psi_k^{i,*}(t) + \frac{\theta_i}{(n+\theta_i)( 1-\hat{\theta})}\sum_{j=1}^n \frac{n}{n+\theta_j}\psi_k^{j,*}(t),\quad i=1,\ldots,n, k=1,\ldots,d \label{eq nash equilibrium}
\end{equation}
$\mathbb{P}$-almost surely for all $t\in [0,T]$, where
\begin{equation}
\hat{\theta} := \sum_{i=1}^n \frac{\theta_i}{n+\theta_i}.
\end{equation}
\end{theorem}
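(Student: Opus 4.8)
The plan is to show that the Nash condition \eqref{Nash condition} decouples, agent by agent, into the classical one-agent problem \eqref{eq auxiliary problem nash} together with the linear constraint \eqref{eq sle}, and then to solve that linear system explicitly.

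First I would fix agent $i$ and regard the competitors' strategies $\varphi^j$, $j\neq i$, as given admissible strategies. Setting $\psi^i := \varphi^i - \frac{\theta_i}{n}\sum_{j\neq i}\varphi^j$ and using linearity of the stochastic integral together with $X_T^{j,\varphi^j}=x_0^j+(\varphi^j\cdot S)_T$, one gets
\[ X_T^{i,\varphi^i} - \theta_i \bar{X}^{-i,\varphi}_T = \tilde{x}_0^i + (\psi^i\cdot S)_T = Y_T^{i,\psi^i}, \]
so agent $i$'s objective in \eqref{eq optimization problem arithm} equals the objective of the reduced problem \eqref{eq auxiliary problem nash}. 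The essential point, which uses the pricing theory of Section~\ref{sec:market}, is that the reduced initial capital $\tilde{x}_0^i = x_0^i-\theta_i\bar{x}_0^{-i}$ depends on the competitors only through their \emph{initial capital} and not through their strategies, since every admissible terminal wealth has time-zero price equal to its initial capital. As $\mathcal{A}$ is a linear space and $\frac{\theta_i}{n}\sum_{j\neq i}\varphi^j\in\mathcal{A}$, the translation $\varphi^i\mapsto\psi^i$ is a bijection of $\mathcal{A}$ onto itself, so maximizing over $\varphi^i\in\mathcal{A}$ is the same as maximizing over $\psi^i\in\mathcal{A}$.

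Next I would characterize the best response. By the assumed uniqueness of the optimizer $\psi^{i,*}$ of \eqref{eq auxiliary problem nash}, agent $i$'s unique best response to fixed competitors is the strategy $\varphi^i$ with $\psi^i=\psi^{i,*}$. Hence a profile $(\varphi^1,\ldots,\varphi^n)$ is a Nash equilibrium if and only if it solves \eqref{eq sle} with the $\psi^{i,*}$ held fixed, and the equilibrium is unique if and only if this linear system has a unique solution.

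It then remains to solve \eqref{eq sle}. Writing $\Phi:=\sum_{j=1}^n\varphi_k^j(t)$ and suppressing $k$ and $t$, each equation reads $\psi^{i,*}=\frac{n+\theta_i}{n}\varphi^i-\frac{\theta_i}{n}\Phi$, so that $\varphi^i=\frac{n}{n+\theta_i}\psi^{i,*}+\frac{\theta_i}{n+\theta_i}\Phi$. Summing over $i$ and using the definition of $\hat{\theta}$ yields $(1-\hat{\theta})\Phi=\sum_{j=1}^n\frac{n}{n+\theta_j}\psi^{j,*}$. Since $\theta_i\in[0,1]$ forces $\frac{\theta_i}{n+\theta_i}\leq\frac{1}{n+1}$ and hence $\hat{\theta}\leq\frac{n}{n+1}<1$, the factor $1-\hat{\theta}$ is strictly positive; thus $\Phi$ is uniquely determined, and substituting it back into the expression for $\varphi^i$ gives \eqref{eq nash equilibrium}. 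I expect the main difficulty to lie in the decoupling argument of the second paragraph — namely verifying that $\varphi^i\mapsto\psi^i$ maps $\mathcal{A}$ bijectively onto itself and that the competitor-dependent shift enters only through the fixed price $\theta_i\bar{x}_0^{-i}$ — after which the remaining linear algebra is routine and the bound $\hat{\theta}<1$ settles existence and uniqueness simultaneously.
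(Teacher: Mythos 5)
Your proposal is correct and follows essentially the same route as the paper: reduce each agent's problem to the single-agent problem \eqref{eq auxiliary problem nash} with reduced initial capital $\tilde{x}_0^i$ via linearity of the stochastic integral, characterize Nash equilibria as solutions of the linear system \eqref{eq sle}, and solve it by summing over $i$ and using $\hat{\theta}\leq\frac{n}{n+1}<1$. If anything, you are slightly more explicit than the paper in verifying that the translation $\varphi^i\mapsto\psi^i$ is a bijection of $\mathcal{A}$ onto itself and in stating the ``Nash equilibrium if and only if \eqref{eq sle} holds'' equivalence, which the paper leaves to the informal discussion preceding the theorem.
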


\begin{proof}
Independent of the exact choice of $\psi^{i,*}$, we can determine the solution to \eqref{eq sle} in terms of $\psi^{i,*}$ for all $t \in [0,T]$ and $\mathbb{P}$-almost all $\omega \in \Omega$. Therefore we fix some arbitrary $t\in [0,T]$ and $\omega \in \Omega$ (we omit the argument $\omega$ in the following calculations) and define (componentwise)
\begin{equation}
\hat{\varphi}(t) :=\sum_{j=1}^n \varphi^j(t).\label{eq pi_hat def}
\end{equation}
Hence \eqref{eq sle} is equivalent to 
\begin{align*}
\psi_k^{i,*}(t) = \frac{n+\theta_i}{n}\varphi_k^i(t) - \frac{\theta_i}{n} \hat{\varphi}_k(t)
\end{align*}
and therefore $\varphi^i$ is implicitly given by 
\begin{equation}
\varphi_k^i(t) = \frac{n}{n+\theta_i}\left(\psi_k^{i,*}(t) + \frac{\theta_i}{n} \hat{\varphi}_k(t) \right). \label{eq pi_i implicit}
\end{equation}
Inserting \eqref{eq pi_i implicit} into \eqref{eq pi_hat def} yields
\begin{align*}
\hat{\varphi}_k(t) = \sum_{i=1}^n \varphi_k^i(t) &= \sum_{i=1}^n \frac{n}{n+\theta_i}\psi_k^{i,*}(t) + \hat{\varphi}_k(t)\sum_{i=1}^n \frac{\theta_i}{n+\theta_i}\\
&= \sum_{i=1}^n  \frac{n}{n+\theta_i}\psi_k^{i,*}(t) + \hat{\theta} \hat{\varphi}_k(t)\\
\Leftrightarrow \left(1-\hat{\theta} \right) \hat{\varphi}_k(t) &= \sum_{i=1}^n  \frac{n}{n+\theta_i}\psi_k^{i,*}(t),
\end{align*}
where we used the abbreviation of $\hat{\theta}$.
Since $\theta_i \in [0,1]$, $i=1,\ldots,n$, we obtain 
\begin{align*}
\hat{\theta} = \sum_{i=1}^n \frac{\theta_i}{n+\theta_i} = \sum_{i=1}^n \left(1 + \frac{n}{\theta_i}\right)^{-1} \leq \sum_{i=1}^n \left(1 + n\right)^{-1} = \frac{n}{n+1} < 1.
\end{align*}
Therefore we can deduce further that
\begin{equation}
\hat{\varphi}_k(t) = \frac{1}{1-\hat{\theta}} \sum_{i=1}^n \frac{n}{n+\theta_i}\psi_k^{i,*}(t) \label{eq pi_hat rep}
\end{equation}
holds $\mathbb{P}$-almost surely for all $t\in [0,T]$. Finally, inserting \eqref{eq pi_hat rep} into \eqref{eq pi_i implicit} yields  the stated expression for the Nash equilibrium. The  line of arguments implies that there exists a unique Nash equilibrium given by \eqref{eq nash equilibrium} if and only if there exists a unique optimal portfolio strategy $\psi^{i,*}$ to the auxiliary problem \eqref{eq auxiliary problem nash}. 
\end{proof}

\begin{remark}
Suppose that instead of the Nash equilibrium we want to determine a Pareto optimal strategy of the system, i.e. we maximize
$$\sum_{i=1}^{n}\beta_i \sup_{\varphi^i \in \mathcal{A}} \mathbb{E}\left[U_i \left( X_T^{i,\varphi^i} - \theta_i \bar{X}^{-i,\varphi}_T \right) \right],\quad \beta_i >0,\, i=1,\ldots,n,$$
the weighted sum of all individuals' relative wealth over all strategies. Following the same line of arguments as before, we can see that optimal strategies are the same as for the Nash equilibrium. This means the social and the individual optimal strategies are the same.
\end{remark}

\begin{remark}
The described method is not limited to classical expected utility maximization. Some examples of other types of optimization problems that can be treated with the described method are the VaR-based optimization by Basak and Shapiro \cite{basak2001value}, the rank-dependent utility model with a VaR-based constraint by He and Zhou \cite{he2016hope} or general mean-variance problems that can be found for example in \cite{korn1997optimal}. Another example is the Cumulative Prospect Theory (CPT) by Tversky and Kahneman \cite{tversky1992advances} that is further analyzed in Subsection \ref{subsec:cpt}.
\end{remark}

\begin{remark}\label{remark:mon}
Since the Nash equilibrium given in Theorem \ref{thm:unique_NE} explicitly contains the optimal solutions to the associated classical portfolio optimization problems (if we set $\theta_i=0$, the agent does not care about the other agents and just solves the classical portfolio problem, i.e. $\varphi_k^i=\psi_k^{i,*}$), we can compare the optimal solution $\psi_k^{i,*}$ of the classical problem to the associated component $\varphi_k^i$ of the Nash equilibrium. If $\psi_k^{i,*}> 0 \, (<0)$ and $\sum_{j\neq i} \frac{n}{n+\theta_j} \psi_k^{j,*}>0 \, (<0)$, $\varphi_k^i(t)$ is increasing (decreasing) in terms of $\theta_i$, which can be seen directly considering the partial derivative
\begin{align*}
    \frac{\partial}{\partial \theta_i} \varphi_k^i(t) =& \frac{n}{(n+\theta_i)^2} \left(1+\frac{\theta_i}{(n+\theta_i)(1-\hat{\theta})} \right)\left(\frac{n}{(n+\theta_i)(1-\hat{\theta})}-1 \right)\psi_k^{i,*}(t)\\
    & + \frac{n}{(n+\theta_i)^2(1-\hat{\theta})}\left(1+\frac{\theta_i}{(n+\theta_i)(1-\hat{\theta})} \right)\sum_{j\neq i} \frac{n}{n+\theta_j}\psi_k^{j,*}(t).
\end{align*}
Hence, under these conditions in a competitive environment ($\theta_i>0$) agent $i$ invests more in the stocks than in a classical non-competitive environment ($\theta_i=0$).
\end{remark}

\section{Examples} \label{sec:examples}
In this section we consider in Subsections \ref{subsec:levy} - \ref{subsec:cpt} special cases of our general market model. In Subsection \ref{subsec:CRR} we show that the reasoning also works for a discrete-time financial market.
\subsection{L\'evy market}\label{subsec:levy}
Our first example is a L\'evy market consisting of a riskless bond with interest rate $r=0$ and $d$ stocks. Thus, let $W$ be a $d$-dimensional Brownian motion and $N$ a Poisson random measure on $(-1,\infty)^d$, i.e.\ $N([0,t]\times B)$ for
$t\geq 0$ and $B\in\mathcal{B}((-1,\infty)^d)$ is the number of all
jumps taking values in the set $B$ up to the time $t$. The
associated L\'{e}vy measure is denoted by $\nu$, i.e. $\nu(B) = \mathbb{E} N([0,1]\times B)$ gives
the expected number of jumps per unit time whose size belong to $B$.
For notational convenience let us define
$\overline{N}(\mathrm{d}t,\mathrm{d}z):=N(\mathrm{d}t,\mathrm{d}z)-\mathds{1}_{\{|\!|z|\!|<1\}}\nu(\mathrm{d}z)\mathrm{d}t$.
The price processes for  $k=1,\ldots,d$ are  given by
\begin{align*}
\mathrm{d}S_k(t)=&S_k(t_-)\Big(\mu_k\,
\mathrm{d}t+\sum_{\ell =1}^d\sigma_{k\ell }\,\mathrm{d}W_{\ell }(t)+\int_{(-1,\infty)^d}z_k\,\overline{N}(\mathrm{d}t,\mathrm{d}z)\Big)
\end{align*}
where $S_k(0)=1$, $\sigma_{k\ell}\ge 0$. By $\mu = (\mu_1,\ldots,\mu_d)$ we denote the drift vector and by $\sigma = (\sigma_{k\ell})_{1\leq k,\ell \leq d}$ the volatility matrix, which is assumed to be regular.
Moreover there are $n$ investors trading in the L\'evy financial market with initial capital $x_0^i$, $i=1,\ldots,n$.  

Assume that each investor uses an exponential utility function 
\begin{equation}
U_i:\R \rightarrow \R,\ U_i(x) = -\exp\left(-\frac{1}{\delta_i}x\right)
\end{equation}
for some parameter $\delta_i > 0$, $i=1,\ldots,n$. Hence the associated objective function is given by 
\begin{equation}
\mathbb{E}\left[-\exp\left(-\frac{1}{\delta_i}\left( X_T^{i,\varphi^i} - \theta_i \bar{X}^{-i,\varphi}_T\right) \right) \right]\label{eq objective exponential}
\end{equation}
for the competition weight parameters $\theta_i\in [0,1]$, $i=1,\ldots,n$. 
We assume that the market is free of arbitrage for an appropriate strategy class and that
\begin{align}\label{integrability_cond1}
 \int\limits_{|\!|z|\!|>
1}\!\!|\!|z|\!|\exp\left(\frac{1}{\delta_i}\Lambda_i
|\!|z|\!|\right)\,\nu(\mathrm{d}z)<\infty
\end{align}
for constants $0<\Lambda_i<\infty$, $i=1,\ldots,n$.
Now we use the  described method  to determine the Nash equilibrium in the given situation.
First, the unique (up to modifications) optimal solution to the optimization problem 
\begin{equation}\label{eq optimization problem auxiliary}
\begin{cases}
& \sup_{\psi^i\in \mathcal{A}} \mathbb{E}\left[-\exp\left(-\delta_i^{-1} Y_T^{i,\psi^i} \right) \right],\\
\text{s.t.} & Y_T^{i,\psi^i} = \tilde{x}_0^i+ (\psi^i \cdot  S)_T,
\end{cases}
\end{equation}
for $\tilde{x}_0^i = x_0^i - \frac{\theta_i}{n}\sum_{j\neq i} x_0^j$ is given by
\begin{equation}
\psi_k^{i,*} (t) = \frac{ \pi_k^{i,*}}{S_k(t)}, \label{eq candidate}
\end{equation}
where  $(\pi_1^{i,*},\ldots,\pi_d^{i,*})$ is the solution of
\begin{equation}\label{eq:dzero}
0=\mu_k -\frac{1}{\delta_i}\sum_{\ell=1}^d\sum_{r=1}^{d}
\pi_r^{i,*}\sigma_{k\ell}\sigma_{r \ell}+ \int_{(-1,\infty)^d} z_k
\left(\exp\left(-\frac{1}{\delta_i} \sum_{r=1}^d\pi_r^{i,*}z_r\right)
-\mathds{1}_{\{|\!|z|\!|<1\}}\right)\,\nu(\mathrm{d}z),\, 
\end{equation}
$k=1,\ldots,d$, which we assume to exist and be unique, \cite{bauerle2011optimal}.
Since $\hat{\theta} < 1$, we know that the unique Nash equilibrium is then given by 
\begin{equation}
\varphi_k^{i,*}(t) = \frac{n}{n+\theta_i} \psi_k^{i,*}(t) + \frac{\theta_i}{(n+\theta_i)(1-\hat{\theta} )}\sum_{j=1}^n \frac{n}{n+\theta_j} \psi_k^{j,*}(t) \label{eq Nash second step}
\end{equation}
$\mathbb{P}$-almost surely for all $t\in [0,T]$ and hence the Nash equilibrium in terms of invested amounts $\pi_k^i(t) := \varphi_k^{i,*}(t) S_k(t)$ is deterministic and constant. 

\begin{remark}\label{remark:no_jumps}
a) \, If we skip the jump component, the Lévy market reduces to a $d$-dimensional Black-Scholes market with constant parameters. In this case, the unique optimal invested amount in the single investor problem is known to be given by 
\begin{equation}
\widetilde{\pi}^{i,*} \coloneqq  \delta_i (\sigma \sigma^\top)^{-1} \mu \label{eq single agent exponential}
\end{equation} 
and hence we obtain the unique Nash equilibrium 
\begin{equation}
\pi^{i,*} = \underbrace{\left(\frac{n\delta_i}{n+\theta_i} + \frac{\theta_i}{(n+\theta_i)(1-\hat{\theta} )} \sum_{j=1}^n \frac{n\delta_j}{n+\theta_j} \right)}_{\eqqcolon C_i}\cdot \left(\sigma \sigma^\top \right)^{-1}\mu, \ i=1,\ldots,n.\label{eq NE BS exp}
\end{equation}

A comparison of the optimal portfolio \eqref{eq single agent exponential} in the single agent problem and the Nash equilibrium given by \eqref{eq NE BS exp} shows that the overall structure is the same. In both cases, the optimal amount invested into the $d$ stocks is constant over time and deterministic. Moreover the optimal investment is given as $(\sigma \sigma^\top)^{-1} \mu$ multiplied by some constant factor. It can be shown that the constant $C_i$ in the Nash equilibrium is strictly increasing in terms of $\theta_i$ (see also Remark \ref{remark:mon}). 
By inserting the  optimal investment strategy we can deduce that the optimal terminal wealth of agent $i$ is given by 
\begin{align*}
X_i^* &= x_0^i  + C_i \Vert \sigma^{-1} \mu \Vert^2 T + C_i (\sigma^{-1}\mu)^\top W_T.  
\end{align*}
Then the expected terminal wealth is obviously 
\begin{equation}
\mathbb{E}[X_i^*] = x_0^i + C_i \Vert \sigma^{-1} \mu \Vert^2 T 
\end{equation}
and therefore strictly increasing in terms of $\theta_i$. Hence, in order to maximize the expected terminal wealth, investor $i$ should choose the competition weight $\theta_i = 1$. 

However, choosing a high competition weight also brings the disadvantage of increasing the probability of a loss (with respect to the initial capital $x_0^i$). To see this, we choose some constant $K < x_0^i$ and consider the probability that the optimal terminal wealth $X_i^*$ is less or equal than $K$. It follows
\begin{align*}
\mathbb{P}(X_i^* \leq K) &=  \Phi \left(\frac{K- x_0^i}{C_i \Vert \sigma^{-1} \mu \Vert \sqrt{T}} - \Vert \sigma^{-1} \mu \Vert \sqrt{T}\right),
\end{align*}
where $\Phi$ denotes the distribution function of the standard normal distribution. The expression inside $\Phi$ is increasing in $\theta_i$ since $K - x_0^i < 0$. Hence the probability that the terminal wealth is significantly smaller than the initial wealth is strictly increasing in terms of $\theta_i$.

b)\,  If we further set $d=1$, $\mu_1 = \mu$, $\sigma_1 = \sigma>0$ , we obtain the constant Nash equilibrium in Corollary 2.4 in \cite{lacker_zariphopoulou_n-agent_nash}. The authors there use the slightly different objective function 
\begin{equation}
\mathbb{E}\left[-\exp\left(-\frac{1}{\delta_i} \left(X_T^{i,\varphi^i} - \frac{\theta_i}{n} \sum_{j=1}^n X_T^{j,\varphi^j} \right) \right) \right],
\end{equation}
which can  easily be rewritten as 
\begin{equation}
\mathbb{E}\left[- \exp\left(-\frac{1}{\tilde{\delta}_i} \left(X_T^{i,\varphi^i} - \tilde{\theta}_i\bar{X}_T^{-i} \right) \right)\right] \label{eq lz nash problem}
\end{equation}
by introducing the parameters $\tilde{\delta}_i = \frac{\delta_i}{1-\frac{\theta_i}{n}}$ and $\tilde{\theta_i} = \frac{\theta_i}{1-\frac{\theta_i}{n}}$. Hence solving the optimization problem associated to \eqref{eq lz nash problem} yields the (one-dimensional) Nash equilibrium in terms of invested amounts
\begin{equation}
\pi^i = \left(\delta_i + \theta_i \frac{\bar{\delta}_n}{1-\bar{\theta}_n} \right)\frac{\mu}{\sigma^2},
\end{equation}
where  $\bar{\theta}_n := \frac{1}{n}\sum_{j=1}^n \theta_j$, $\bar{\delta}_n = \frac{1}{n}\sum_{j=1}^n \delta_j$.
\end{remark}

\subsection{Market with stochastic volatility}
Next we consider a stochastic volatility model. To keep the exposition simple we concentrate on the so-called Heston model where we have only one risky asset. For multivariate extensions see e.g. \cite{bauerle2013optimal}. A slightly more general stochastic volatility model with two investors, CRRA utility and ratio of competing wealth has been considered in \cite{kraft2020dynamic}. The riskless bond has again interest rate zero. There are two correlated Brownian motions $W^S$ and $W^Z$ with correlation $\rho.$ The price process $S$ of the risky asset is given by
\begin{eqnarray*}
\mathrm{d} S(t) &=&  S(t) \left( \lambda Z(t) \mathrm{d}t + \sqrt{Z(t)} \mathrm{d}W^S(t)\right),\\
\mathrm{d} Z(t) &=& \kappa (\theta -Z(t)) \mathrm{d}t + \sigma \sqrt{Z(t)} \mathrm{d} W^Z(t),
\end{eqnarray*}
where the constants $\lambda, \kappa, \theta, \sigma$ are assumed to be positive and to satisfy the Feller condition $2 \kappa \theta \ge \sigma^2$ in order to ensure that $Z$ is positive. 

We assume here that each agent uses a utility function of the form
\begin{equation*}
U_i:(0,\infty) \rightarrow \R,\ U_i(x) = \left(1-\frac{1}{\delta_i} \right)^{-1}x^{1-\frac{1}{\delta_i}},
\end{equation*}
for some parameter $\delta_i > 0$, $\delta_i \neq 1$, $i=1,\ldots,n$. The associated objective function is then given by 
\begin{equation*}
\mathbb{E}\left[\left(1-\frac{1}{\delta_i}\right)^{-1}\left(X_T^{i,\varphi^i} - \theta_i \bar{X}^{-i,\varphi}_T\right)^{1-\frac{1}{\delta_i}}\right].
\end{equation*}
The competition weights $\theta_i \in [0,1]$ are chosen with respect to the condition $x_0^i - \frac{\theta_i}{n}\sum_{j\neq i} x_0^j>0$ for the initial capitals $x_0^i>0$, $i=1,\ldots,n$.
Again, let $\tilde{x}_0^i = x_0^i - \frac{\theta_i}{n}\sum_{j\neq i} x_0^j$. The unique (up to modifications) optimal solution of the classical portfolio optimization problem 
\begin{equation}\label{eq optimization problem auxiliary2}
\begin{cases}
& \sup_{\psi^i\in \mathcal{A}} \mathbb{E}\left[\left(1-\frac{1}{\delta_i}\right)^{-1}\left(Y_T^{i,\psi^i}\right)^{1-\frac{1}{\delta_i}}\right],\\
\text{s.t.} &  Y_T^{i,\psi^i} = \tilde{x}_0^i+ (\psi^i \cdot  S)_T,
\end{cases}
\end{equation}
is given by
\begin{equation*}
\frac{\psi^{i,*}(t) S(t)}{Y_t^{i,*}} = \delta_i \lambda + f_i(t)
\end{equation*}
$\mathbb{P}$-almost surely for all $t\in [0,T]$ where the deterministic function $f_i$ can be given explicitly (Sec. 3.1 in \cite{kallsen2010utility}). Finally, $\varphi^{i,*}$, $i=1,\ldots,n$, given by 
\begin{equation*}
\varphi^{i,*}(t) = \frac{n}{n+\theta_i}\psi^{i,*}(t) + \frac{\theta_i}{(n+\theta_i)( 1-\hat{\theta})}\sum_{j=1}^n \frac{n}{n+\theta_j} \psi^{i,*}(t)
\end{equation*}
$\mathbb{P}$-almost surely for all $t\in [0,T]$, is the unique Nash equilibrium in the given situation. \medskip

Setting $\kappa, \sigma = 0$, $\lambda = \frac{\widetilde{\mu}}{\widetilde{\sigma}^2}$ and $Z(0) = \widetilde{\sigma}^2$ reduces the Heston model to the classical Black-Scholes model with constant drift $\widetilde{\mu}$ and constant volatility $\widetilde{\sigma}$. Then the optimal solution $\psi^{i,*}$ to the classical problem \eqref{eq optimization problem auxiliary2} is given by 
\begin{equation}
    \frac{\psi^{i,*}(t)S(t)}{Y_t^{i,*}} = \delta_i\frac{\widetilde{\mu}}{\widetilde{\sigma}^2}.
\end{equation}

\subsection{Cumulative Prospect Theory (CPT)}\label{subsec:cpt}
In the CPT model \cite{tversky1992advances}, investors evaluate their wealth with respect to some reference point $\xi>0$. Values smaller than $\xi$ are treated as losses while values larger than $\xi$ are seen as gains. Studies have shown that people tend to act risk-seeking when dealing with losses and risk-averse when dealing with gains. This effect is captured by S-shaped utility functions for example of the form
\begin{equation}
    U(x) = \begin{cases}
    -a\cdot (\xi-x)^{\delta},\, & x\leq \xi,\\
    b \cdot (x-\xi)^{\gamma},\, & x > \xi,
    \end{cases}
\end{equation}
for $0<\delta\leq 1,\, 0 < \gamma < 1$ and $a> b > 0$. \cite{berkelaar2004optimal} found the unique optimal solution to the associated single investor portfolio optimization problem in a Black-Scholes market with constant market parameters. The stock price processes are hence given by 
$$ \mathrm{d}S_k(t) = S_k(t)\left(\mu_k \mathrm{d}t + \sum_{\ell =1 }^d \sigma_{k\ell} \mathrm{d}W_\ell(t)\right),\, t\in [0,T],\, k=1,\ldots,d. $$

Since in our models investors evaluate their wealth with respect to the wealth of the other investors in the market, we use a reference point in terms of the weighted mean of the other investors wealth. Therefore the objective function of agent $i$ is given by 
\begin{align*}
    \EE& \left[  -a_i \cdot \left( \frac{\theta_i}{n} \sum_{j\neq i} X_T^{j,\varphi^j} - X_T^{i,\varphi^i} \right)^{\delta_i} \mathbbm{1}\left\{X_T^{i,\varphi^i} \leq    \frac{\theta_i}{n}\sum_{j\neq i} X_T^{j,\varphi^j}\right\}\right.\\
    &\quad  \left. +\,  b_i \cdot \left(X_T^{i,\varphi^i} -  \frac{\theta_i}{n}\sum_{j\neq i} X_T^{j,\varphi^j}  \right)^{\gamma_i} \mathbbm{1}\left\{X_T^{i,\varphi^i} > \frac{\theta_i}{n}\sum_{j\neq i} X_T^{j,\varphi^j} \right\}  \right]
\end{align*}
for $0<\delta_i\leq 1$, $0 < \gamma_i < 1$ and $a_i > b_i > 0$, $i=1,\ldots,n$. We further introduce the constraint $X_T^{i,\varphi^i} - \frac{\theta_i}{n}\sum_{j\neq i} X_T^{j,\varphi^j} \geq - \xi_i$ for some $\xi_i > 0$. Economically this means that the investor only accepts a downward deviation from the weighted average wealth of the other investors by a constant $\xi_i$. Hence we obtain the following optimization problem
\begin{align*}
    \begin{cases} 
         \, &\sup_{\varphi^i \in \mathcal{A}} \EE \left[  -a_i \cdot \left( \frac{\theta_i}{n} \sum_{j\neq i} X_T^{j,\varphi^j} - X_T^{i,\varphi^i} \right)^{\delta_i} \mathbbm{1}\left\{X_T^{i,\varphi^i} \leq   \frac{\theta_i}{n}\sum_{j\neq i} X_T^{j,\varphi^j}\right\}\right.\\
    &\qquad  \left. +\,  b_i \cdot \left(X_T^{i,\varphi^i} -  \frac{\theta_i}{n}\sum_{j\neq i} X_T^{j,\varphi^j}  \right)^{\gamma_i} \mathbbm{1}\left\{X_T^{i,\varphi^i} > \frac{\theta_i}{n}\sum_{j\neq i} X_T^{j,\varphi^j} \right\}  \right],\\
    \quad \text{s.t. } & X_T^{i,\varphi^i} =  x_0^i + (\varphi^i \cdot S)_T,\,  X_T^{i,\varphi^i} - \frac{\theta_i}{n}\sum_{j\neq i} X_T^{j,\varphi^j} \geq - \xi_i.
    \end{cases}    
\end{align*}

\medskip

The unique solution to the associated classical problem 
\begin{equation}\label{eq:classical_cpt}
    \begin{cases}
    & \sup_{\psi^i \in \mathcal{A}}\E\left[-a_i \left(\xi_i - Y_T^{i,\psi^i} \right)^{\delta_i} \mathbbm{1}\left\{Y_T^{i,\psi^i} \leq \xi_i \right\} + b_i \left(Y_T^{i,\psi^i} - \xi_i \right)^{\gamma_i} \mathbbm{1}\left\{Y_T^{i,\psi^i} > \xi_i \right\} \right],\\
    \text{s.t. } & Y_T^{i,\psi^i} = \widetilde{x}_0^i + \xi_i + (\psi^i \cdot S)_T, \, Y_T^{i,\psi^i} \geq 0,
    \end{cases}
\end{equation}
is then given  by \cite{berkelaar2004optimal} 
\begin{align*}
    \psi_k^{i,*}(t)S_k(t) =  &\Big(\Big(\sigma \sigma^\top \Big)^{-1} \mu\Big)_k \cdot \left\{ \frac{\xi_i \phi\left(g(t,\bar{L}_i) \right)}{\Vert \kappa \Vert \sqrt{T-t}}\right. \\
    & \left.+ \left(\frac{b_i \gamma_i}{\lambda_i L(t)}\right)^{\frac{1}{1-\gamma_i}} \mathrm{e}^{\Gamma_i(t)} \left(\frac{\phi\left(g(t,\bar{L}_i) + \frac{\Vert \kappa \Vert \sqrt{T-t}}{1-\gamma_i} \right)}{\Vert \kappa \Vert \sqrt{T-t}} + \frac{\Phi\left(g(t,\bar{L}_i) + \frac{\Vert \kappa \Vert \sqrt{T-t}}{1-\gamma_i} \right)}{1-\gamma_i} \right)  \right\},
\end{align*}    
where $\kappa$ and $L$ are the market price of risk and state price density in the given Black-Scholes market and $\phi$ and $\Phi$ describe the density and cumulative distribution function of the standard normal distribution. Moreover, $g$ and $\Gamma_i$ are functions in terms of the market parameters, whereby $g$ also depends on $L$. Finally, $\lambda_i$ is the Lagrange multiplier to the constraint $\EE[L(T)Y_T^{i,\psi^i}]=\widetilde{x}_0^i+\xi_i$ and $\bar{L}_i$ is the unique root of some additional function $f_i$ depending only on the market and preference parameters of the problem. Explicit representations of the mentioned functions can be found in \cite{berkelaar2004optimal}. Using those explicit representations, it can be easily seen that $\psi_k^{i,*}(t)>0$, which implies, together with Remark \ref{remark:mon}, that the associated component $\varphi^{i,*}_k(t)$ of the Nash equilibrium  (see below) is increasing in terms of $\theta_i$.\\ 
Finally, $\varphi_k^{i,*}$, $i=1,\ldots,n$, $k=1,\ldots,d$ given by 
\begin{equation*}
\varphi_k^{i,*}(t) = \frac{n}{n+\theta_i}\psi_k^{i,*}(t) + \frac{\theta_i}{(n+\theta_i)( 1-\hat{\theta})}\sum_{j=1}^n \frac{n}{n+\theta_j} \psi_k^{i,*}(t)
\end{equation*}
$\mathbb{P}$-almost surely for all $t\in [0,T]$, is the unique Nash equilibrium in the given situation. \medskip

Taking the limit $\xi_i \to 0$ in \eqref{eq:classical_cpt} yields a classical utility maximization problem with power utility $U_i(y) = b_i y^{\gamma_i}$, $y>0$. As $\xi_i \to 0$, the associated optimal solution $\psi_k^{i,*}$ then converges to 
\begin{equation}
    \psi_k^{i,*}(t) S_k(t) = \frac{1}{1-\gamma_i}\Big( \Big(\sigma \sigma^\top\Big)^{-1}\mu\Big)_k \left(\frac{b_i\gamma_i}{\lambda_i L(t)} \right)^{\frac{1}{1-\gamma_i}}\mathrm{e}^{\Gamma_i(t)}.
\end{equation}

Moreover, if we set $b_i = \frac{1}{\gamma_i}$ we obtain the well-known Merton ratio
\begin{equation}
    \frac{\psi_k^{i,*}(t)S_k(t)}{Y_t^{i,*}}=\frac{1}{1-\gamma_i}\Big(\Big(\sigma\sigma^\top\Big)^{-1}\mu\Big)_k.
\end{equation}

\subsection{ Cox-Ross-Rubinstein model}\label{subsec:CRR}
Our last example is a Cox-Ross-Rubinstein market in discrete time. As already stated in Remark \ref{remark:discrete_vs_continuous}, all arguments carry over for problems in discrete time. 
The market consists of one riskless bond with price process 
\begin{equation}
     B(t_k) = 1,\quad k = 0,\ldots, N,
\end{equation}
 and one stock with price process $(S(t_k))_{k=0,\ldots,N}$, where $t_k = k\cdot \frac{T}{N}$, $k=0,\ldots,N$. The stock price process is given by 
\begin{equation}
    S(t_k) = S(0) \prod_{\ell=1}^{k} {R}_{\ell},\ k=0,\ldots,N.
\end{equation}
The random variables ${R}_k$, $k=1,\ldots,N$, are i.i.d. with $\PP({R}_{k} = u) = p = 1-\PP({R}_{k} = d)$ for $0<d<u$ and $p\in (0,1).$ We assume that $d < 1 < u$ to exclude arbitrage \cite{bauerle2011markov,bauerle2017finanzmathematik}.  Note that the wealth process satisfies 
$$ X^\varphi(t_k) = x_0+\sum_{\ell=1}^{k} \varphi(t_{\ell-1})  (S(t_\ell)-S(t_{\ell-1}))$$
and is thus again linear in $\varphi.$

We assume that investors use exponential utility functions given by
$$U_i:\R \to \R,\, U_i(x) = -\exp\left(-\frac{1}{\delta_i}x\right),\, i=1,\ldots,n. $$
Using \cite[p.~119]{bauerle2017finanzmathematik} the unique optimal solution to the classical optimization problem
\begin{equation}
   \begin{cases}
   & \sup_{\psi^i\in \mathcal{A}} \EE\left[-\exp\left(-\frac{1}{\delta_i} Y^{i,\psi^i}(T) \right) \right],\\
   \text{s.t.} & Y^{i,\psi^i}(T) = \tilde{x}_0^i + (\psi^i \cdot S)_T,
   \end{cases}
\end{equation}
is given by
\begin{equation}
\psi^{i,*}(t_k) = \frac{ \delta_i}{S(t_k)} \frac{\log\left(\frac{1-q}{1-p}\right)-\log\left(\frac{q}{p}\right)}{u-d},
\end{equation}
where $q = \frac{1-d}{u-d}.$

Then the described method yields the unique Nash equilibrium $\pi^{i,*}$, $i=1,\ldots,n$, where
\begin{equation}
\pi^*_i = \left(\frac{n}{n+\theta_i}\delta_i + \frac{\theta_i}{(n+\theta_i)(1-\hat{\theta})}\sum_{j=1}^n \frac{n}{n+\theta_j}\delta_j\right) \frac{\log\left(\frac{1-q}{1-p}\right)-\log\left(\frac{q}{p}\right)}{u-d},\label{eq NE CRR exp}
\end{equation}
is the invested amount. 
Similar to the Nash equilibrium in Remark \ref{remark:no_jumps}, the invested amount is constant in time and given as the constant factor $C_i$ times some expression depending only on the market parameters. If the expression $(u-d)^{-1}\left(\log\left(\frac{1-q}{1-p}\right)-\log\left(\frac{q}{p}\right)\right)$ is strictly positive (this is equivalent to $p>q$) we can use the same argumentation regarding the monotonicity of $C_i$ as before.

\begin{remark}
The Cox-Ross-Rubinstein model brings the advantage of being very simple, but also quite popular among financial markets in discrete time. Moreover, the Cox-Ross-Rubinstein model is a discrete time approximation of the Black-Scholes market which we already saw as a special case of the Lévy market in Subsection \ref{subsec:levy}. Hence, it does not come as a surprise that the overall structures of the Nash equilibria in the Black-Scholes market with constant parameters and the Cox-Ross-Rubinstein market coincide.
\end{remark}

\section{The mean field Problem}\label{sec:MF}

In this section we consider the limit of the previous $n$-agent game as $n$ tends to infinity. Hence we work with a continuum of agents and study the optimal investment of some representative investor whose initial capital and preference parameters are realizations of suitable random variables. \medskip

To motivate the subsequent definition, we provide an informal derivation of the limit of the Nash equilibrium \eqref{eq nash equilibrium} as $n\to \infty$. Obviously, we obtain 
$$ \lim_{n\to \infty} \frac{n}{n+\theta_i} =1,\quad \lim_{n\to \infty} \frac{n\theta_i}{n+\theta_i} = \theta_i. $$
Moreover, if we assume that $\theta_1,\theta_2,\ldots$ are i.i.d. random variables, we obtain
$$ \hat{\theta} = \sum_{j=1}^n \frac{\theta_j}{n+\theta_j} \stackrel{\text{a.s.}}{\longrightarrow} \E[\theta_1],\, n \to \infty, $$
since, using the law of large numbers, 
\begin{align*} 
&\hat{\theta} = \sum_{j=1}^n \frac{\theta_j}{n} \cdot \frac{n}{n+\theta_j} \leq \sum_{j=1}^n \frac{\theta_j}{n} \stackrel{\text{a.s.}}{\longrightarrow} \E[\theta_1],\, n \to \infty,\\
& \hat{\theta} = \sum_{j=1}^n \frac{\theta_j}{n} \cdot \frac{n}{n+\theta_j} \geq \frac{n}{n+1} \sum_{j=1}^n \frac{\theta_j}{n} \stackrel{\text{a.s.}}{\longrightarrow} \E[\theta_1],\, n \to \infty.
\end{align*}

Finally, if we assume that, conditional on $\mathcal{F}_T$, $\psi_k^{1,*}(t),\psi_k^{2,*}(t),\ldots$ are i.i.d. random variables (for any $t\in [0,T]$, $k\in \{1,\ldots,d\}$) we obtain (analogously, using the law of large numbers and a sandwich argument)
$$ \sum_{j=1}^n \frac{1}{n+\theta_j} \psi^{j,*}_k(t) \stackrel{\text{a.s.}}{\longrightarrow} \E\big[\psi^{1,*}_k(t)|\mathcal{F}_T\big],\, n \to \infty. $$
Note that we can only assume that $\psi^{j,*}_k(t)$ are i.i.d. \textit{given $\mathcal{F}_T$} as they are solutions to portfolio optimization problems at time $T$.\medskip

Hence, we expect that the components $\varphi_k^{i,*}(t)$ of the Nash equilibrium \eqref{eq nash equilibrium} converge to
\begin{equation}
    \psi_k^{*}(t) + \frac{\theta}{1-\E[\theta]} \E[\psi_k^{*}(t)|\mathcal{F}_T], \quad k=1,\ldots,d
\end{equation}
as $n\to \infty$, if $\theta \sim \theta_1$ and $\psi_k^*(t)\sim \psi_k^{1,*}(t)$ (given $\mathcal{F}_T$). \medskip 

Now we give a formal definition of a mean-field equilibrium and prove that it does indeed coincide with the informally derived limit above. We will again use the financial market described in Section \ref{sec:market}. Moreover, we assume that the underlying probability space additionally contains a $(0,\infty) \times (0,\Delta] \times [0,1]$-valued random vector $\zeta = (\xi, \delta, \theta)$ independent of the filtration $(\mathcal{F}_t)$ with $\Delta>0$. We assume that $ \EE[\xi^2]<\infty.$ Finally, we define an additional filtration $(\mathcal{F}_t^{\text{MF}})_{t\in [0,T]}$ given by 
\begin{equation*}
\mathcal{F}^{\text{MF}}_t \coloneqq \sigma\left(\mathcal{F}_t,\ \zeta\right).
\end{equation*}

The random variables $\xi$ and $\delta,\ \theta$ denote the initial capital and preference parameters of some representative investor. \medskip

In this setting the wealth process of some representative investor is  given by 
\begin{equation}
    X_t^\varphi = \xi + (\varphi \cdot S)_t,\ t\in [0,T],
\end{equation}
where $\varphi$ is an admissible strategy representing the number of stocks held at time $t\in [0,T]$. We say that $\varphi$ is an admissible strategy if $\varphi \in \mathcal{A}^{\text{MF}}$, where
\begin{align*}
\AMF \coloneqq \Big\{ &\varphi \in L^{\text{MF}}(S): \,  (\varphi\cdot S)_T \in L^2(\mathbb{P}), \; (\varphi \cdot S) Z^\mathbb{Q} \text{ is a $\mathbb{P}$-martingale for all S}\sigma\text{MM } \mathbb{Q} \\
 & \text{ with density process } Z^\mathbb{Q} \text{ and  } \frac{\dx \mathbb{Q}}{\dx \mathbb{P}} \in L^2(\mathbb{P})\Big\}.
\end{align*}
By $L^{\text{MF}}(S)$ we denote the set of $(\mathcal{F}_t^{\text{MF}})$-predictable, $S$-integrable stochastic processes. \medskip

Assume that $U:\mathcal{D}\rightarrow \R$ is a utility function defined on a domain $\mathcal{D}\in \{\R,\, [0,\infty),\, (0,\infty)\}$ including some parameter $\delta>0$. We again extend the definition of $U$ on $\R$ by setting $U(x)=-\infty$ if $x\notin \mathcal{D}$. Now assume that $\delta$ is part of the characterization of the representative investor. Then the combination of $U$ and $\delta$ (by inserting $\delta$ as the parameter in $U$) yields a (stochastic) utility function denoted by $U_\delta$.

Then a representative investor faces the following optimization problem \smallskip
\begin{equation}
\begin{cases} & \sup_{\varphi \in \mathcal{A}^{\text{MF}}} \EE\left[U_\delta \left(X_T^\varphi- \theta \bar{X}\right)\right],\\
\text{s.t. } & X_T^\varphi = \xi + (\varphi \cdot S)_T,\, \bar{X} = \EE[X_T^\varphi| \mathcal{F}_T].
\end{cases} \label{eq problem mean field general}
\end{equation}
 
\begin{remark}
We need to ensure that there is at least one strategy s.t. $X_T^\varphi- \theta \bar{X} \in \mathcal{D}$ $\mathbb{P}$-almost surely.
 If $\mathcal{D}\in \{[0,\infty), (0,\infty)\}$ we can therefore only allow choices of $\xi$ and $\theta$ that satisfy $\xi - \theta \bar{\xi}\geq 0 (>0)$ $\PP$-almost surely, where $\bar{\xi} \coloneqq \EE[\xi]$.
\end{remark}

The optimal solution to \eqref{eq problem mean field general} is called \emph{mean field equilibrium}.

\begin{definition}
An admissible strategy $\varphi^*\in \mathcal{A}^{\text{MF}}$ is called a \emph{mean field equilibrium} (MFE), if it is an optimal solution to the optimization problem \eqref{eq problem mean field general}. This means in particular that $\varphi^*$ needs to satisfy $\bar{X} = \E[X_T^{\varphi^*}| \mathcal{F}_T]$.
\end{definition}

The optimization problem \eqref{eq problem mean field general} can be solved similarly to the $n$-agent problem. Therefore we define the following auxiliary problem 
\begin{equation}
\begin{cases}
 & \sup_{\psi \in \AMF} \EE\left[U_\delta(Z^\psi_T)\right],\\
\st & Z^\psi_T = \xi - \theta \bar{\xi} + (\psi \cdot S)_T.
\end{cases}\label{eq aux problem mean field general}
\end{equation}

Then the mean field equilibrium to \eqref{eq problem mean field general} is given by the following theorem in terms of the optimal solution to the auxiliary problem \eqref{eq aux problem mean field general}.

\begin{theorem}\label{thm mean field general}
Let $\E[\theta]\eqqcolon \bar{\theta} < 1$. If \eqref{eq aux problem mean field general} has a unique (up to modifications) optimal portfolio strategy $\psi^{*}$, then there exists a unique mean field equilibrium for \eqref{eq problem mean field general} given by 
\begin{equation}\label{eq:NE_MF}
\varphi_k^*(t) = \psi_k^*(t) + \frac{\theta}{1-\bar{\theta}} \EE\left[\psi_k^*(t) | \mathcal{F}_T\right],\, k=1,\ldots,d,
\end{equation}
$\PP$-almost surely for all $t\in [0,T]$.
\end{theorem}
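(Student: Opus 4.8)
The plan is to mirror the $n$-agent argument of Theorem~\ref{thm:unique_NE}, replacing the finite sums by conditional expectations and the discrete relation \eqref{eq sle} by a single self-consistency (fixed-point) equation in the candidate strategy $\varphi^*$. First I would observe, exactly as in the finite case, that the claim $\theta\bar X = \theta\,\EE[X_T^\varphi|\mathcal{F}_T]$ entering the objective is, from the representative investor's viewpoint, effectively a fixed asset whose time-zero price is $\theta\bar\xi = \theta\,\EE[\xi]$. Since the price of $X_T^\varphi$ is its initial capital $\xi$, linearity of the stochastic integral and the pricing identity $\EE_{\mathbb{Q}}[X_T^\varphi]=\xi$ from Section~\ref{sec:market} give that hedging the claim $\theta\bar X$ consumes exactly $\theta\bar\xi$ of the initial capital, leaving the reduced capital $\xi-\theta\bar\xi$ for genuine optimization. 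This is precisely the initial capital of the auxiliary problem \eqref{eq aux problem mean field general}, whose unique optimizer $\psi^*$ exists by hypothesis.

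\medskip\noindent
Next I would set up the analogue of \eqref{eq hedging variable}: the optimal relative wealth $Z_T^{\psi^*}=\xi-\theta\bar\xi+(\psi^*\cdot S)_T$ must equal $X_T^\varphi-\theta\bar X$ for the sought equilibrium $\varphi$. Writing $X_T^\varphi=\xi+(\varphi\cdot S)_T$ and $\bar X=\EE[X_T^\varphi|\mathcal{F}_T]=\xi+\EE[(\varphi\cdot S)_T|\mathcal{F}_T]$ (using $\mathcal{F}_t$-measurability of $\xi$ only after conditioning appropriately; here $\xi$ is $\zeta$-measurable, so one must be careful that $\EE[\xi|\mathcal{F}_T]=\EE[\xi]=\bar\xi$ by the independence of $\zeta$ and $(\mathcal{F}_t)$), and matching the stochastic-integral parts componentwise and $\PP$-almost surely for all $t$, yields the strategy-level relation
\begin{equation*}
\psi_k^*(t)=\varphi_k(t)-\theta\,\EE\!\left[\varphi_k(t)\,\middle|\,\mathcal{F}_T\right],\qquad k=1,\ldots,d.
\end{equation*}
This is the continuum counterpart of \eqref{eq sle}, with the empirical average $\tfrac1n\sum_{j\neq i}$ replaced by the conditional expectation $\EE[\,\cdot\,|\mathcal{F}_T]$.

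\medskip\noindent
Then I would solve this equation for $\varphi_k$ by the same averaging trick used for $\hat\varphi$ in the proof of Theorem~\ref{thm:unique_NE}. Applying $\EE[\,\cdot\,|\mathcal{F}_T]$ to both sides and using that $\theta$ is $\zeta$-measurable hence independent of $\mathcal{F}_T$, so that $\EE[\theta\,\EE[\varphi_k|\mathcal{F}_T]\,|\mathcal{F}_T]=\EE[\theta]\,\EE[\varphi_k|\mathcal{F}_T]=\bar\theta\,\EE[\varphi_k|\mathcal{F}_T]$, I obtain
\begin{equation*}
\EE\!\left[\psi_k^*(t)\,\middle|\,\mathcal{F}_T\right]=(1-\bar\theta)\,\EE\!\left[\varphi_k(t)\,\middle|\,\mathcal{F}_T\right].
\end{equation*}
Since $\bar\theta<1$ by assumption, this inverts to give $\EE[\varphi_k|\mathcal{F}_T]=(1-\bar\theta)^{-1}\EE[\psi_k^*|\mathcal{F}_T]$, the analogue of \eqref{eq pi_hat rep}. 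Substituting back into the strategy-level relation produces exactly \eqref{eq:NE_MF}, and uniqueness follows because each algebraic step is an equivalence given uniqueness of $\psi^*$.

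\medskip\noindent
The main obstacle, and where I would spend the most care, is the measurability bookkeeping around the two different conditioning operations and the interchange of conditional expectation with the stochastic integral. Concretely: one must justify that $\EE[(\varphi\cdot S)_T|\mathcal{F}_T]$ can be represented as a stochastic integral of the conditionally-averaged integrand, i.e.\ that conditioning on $\mathcal{F}_T$ commutes with $(\,\cdot\,\cdot S)_T$ in the sense needed to pass from the wealth-level matching to the strategy-level relation. This hinges on $\varphi$ being $(\mathcal{F}_t^{\text{MF}})$-predictable while $S$ is $(\mathcal{F}_t)$-adapted, and on the independence of $\zeta$ from the market filtration, which lets the $\zeta$-dependence (in particular $\theta$) be pulled out of $\mathcal{F}_T$-conditional expectations as a constant. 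I would also need to verify that the candidate $\varphi^*$ in \eqref{eq:NE_MF} is genuinely admissible, i.e.\ lies in $\AMF$: predictability is clear, and the $L^2$ and martingale requirements should follow from those of $\psi^*$ together with $\bar\theta<1$ and the conditional-expectation contraction on $L^2$, but this verification is the one genuinely technical point (the ``technical obstacles'' alluded to in the introduction) rather than a routine calculation.
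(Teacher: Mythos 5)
Your proposal is correct and follows essentially the same route as the paper: reduce to the auxiliary problem with capital $\xi-\theta\bar\xi$, impose the consistency condition $\bar X=\EE[X_T^\varphi|\mathcal F_T]$, and invert using $\EE[\theta\,\cdot\,|\mathcal F_T]=\bar\theta\,\cdot$ and $\bar\theta<1$; the paper merely solves the fixed point at the level of the terminal wealth $\bar X$ before passing to strategies, whereas you solve it directly at the strategy level, which is equivalent. The one technical point you flag but do not carry out --- that $\EE[(\varphi\cdot S)_T\,|\,\mathcal F_T]=(\bar\varphi\cdot S)_T$ with $\bar\varphi_k(u)=\EE[\varphi_k(u)|\mathcal F_T]$ --- is exactly what the paper proves in detail, via the product-space factorization $\Omega=\Omega_1\times\Omega_2$ induced by the independence of $\zeta$ and $(\mathcal F_t)$ together with a stochastic Fubini theorem.
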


\begin{proof}
In order to solve the optimization problem \eqref{eq problem mean field general}, we assume that $\bar{X}$ is some fixed $\mathcal{F}_T$-measurable random variable of the form 
\begin{equation*}
\bar{X} = \EE\left[X_T^\alpha | \mathcal{F}_T\right]
\end{equation*} 
for some admissible strategy $\alpha \in \AMF$ with $X_0^\alpha = \xi$. Moreover, we define the random variable 
\begin{equation}
\bar{X}_t^\alpha \coloneqq \EE\left[X_t^\alpha  | \mathcal{F}_T\right], \ t\in [0,T].
\end{equation}
Since $(X_t^\alpha)$ can be written as 
\begin{equation}
    X_t^\alpha = \xi + \sum_{k=1}^d \int_0^t \alpha_k(u)\dx S_k(u),
\end{equation}

we obtain 
\begin{equation}\label{eq:X_alpha_bar}
\bar{X}_t^\alpha = \bar{\xi} + \sum_{k=1}^d \int_0^t \bar{\alpha}_k(u) \dx S_k(u),
\end{equation}
where $\bar{\alpha}_k(u) \coloneqq \EE[\alpha_k(u)| \mathcal{F}_T]$, $k=1,\ldots,d,$ and $\bar{\xi} = \EE[\xi]$. \medskip

The representation \eqref{eq:X_alpha_bar} of $\bar{X}_t^\alpha$ requires a little more explanation. At first, we used the independence of $\xi$ and $(\mathcal{F}_t)$ and the linearity of the conditional expectation. To obtain 
\begin{equation}\label{eq:switch}
    \EE\left[\int_0^t \alpha_k(u) \dx S_k(u) \Big| \mathcal{F}_T \right] = \int_0^t \bar{\alpha}_k(u) \dx S_k(u),
\end{equation}
we first observe that the sample space can be written as $\Omega = \Omega_1 \times \Omega_2$ with $\sigma$-algebras $\mathcal{F}^{(1)}$ and $\mathcal{F}^{(2)}$ on $\Omega_1,\, \Omega_2$, respectively, and probability measures $\PP_1$ and $\PP_2$ on $\mathcal{F}^{(1)},\, \mathcal{F}^{(2)}$, satisfying $\PP = \PP_1 \otimes \PP_2$ due to the independence of $(\mathcal{F}_t)$ and $\zeta$. Hence $\Omega_1$ and $\Omega_2$ are associated to $(\mathcal{F}_t)$ and $\zeta$, respectively. It follows that for any $F\in\mathcal{F}_T$ there exists a unique $G\in \mathcal{F}^{(1)}$ such that $F = G \times \Omega_2$.

Now we can show that the conditional expectation of some $\mathcal{F}_T^{\text{MF}}$-measurable, $\PP$-integrable random variable $Y = Y(\omega_1, \omega_2)$ can be written as an integral over $\Omega_2$ with respect to $\PP_2$. At first we notice that $\EE[Y|\mathcal{F}_T]$ is a random variable that can be written in terms of $\omega_1$ only, due to the $\mathcal{F}_T$-measurability. Now let $F\in \mathcal{F}_T$ with decomposition $F=G\times \Omega_2.$ It follows (using Fubini's theorem and the definition of conditional expectation)
\begin{align*}
    \int_G \int_{\Omega_2} Y(\omega_1, \omega_2) \dx \PP_2(\omega_2) \dx \PP_1(\omega_1) &= \int_F Y(\omega_1, \omega_2) \dx \PP(\omega_1, \omega_2) \\
    &= \int_F \EE[Y|\mathcal{F}_T](\omega_1, \omega_2) \dx \PP(\omega_1, \omega_2) \\
    &= \int_G \int_{\Omega_2} \EE[Y|\mathcal{F}_T](\omega_1, \omega_2) \dx \PP_2(\omega_2) \dx \PP_1(\omega_1)\\
    &= \int_G \int_{\Omega_2} \EE[Y|\mathcal{F}_T](\omega_1) \dx \PP_2(\omega_2) \dx \PP_1(\omega_1)\\
    &=\int_G \EE[Y|\mathcal{F}_T](\omega_1) \dx \PP_1(\omega_1).
\end{align*}
Therefore we obtain 
\begin{equation}
    \EE[Y|\mathcal{F}_T](\omega_1) = \int_{\Omega_2} Y(\omega_1, \omega_2) \dx \PP_2(\omega_2)
\end{equation}
$\PP_1$-almost surely. Now we can use this result to prove \eqref{eq:switch}: 
\begin{align*}
     \EE\left[\int_0^t \alpha_k(u) \dx S_k(u) \Big| \mathcal{F}_T \right] &= \int_{\Omega_2} \int_0^t \alpha_k(u) \dx S_k(u) \dx \PP_2(\omega_2) \\
     &= \int_0^t \int_{\Omega_2} \alpha_k(u) \dx\PP_2(\omega_2) \dx S_k(u) \\
     &= \int_0^t \EE[\alpha_k(u)|\mathcal{F}_T] \dx S_k(u) \\
     &= \int_0^t \bar{\alpha}_k(u) \dx S_k(u).
\end{align*}
The second equality holds due to a stochastic version of Fubini's theorem that can be found in \cite{protter2005}, Theorem 65. Hence the representation \eqref{eq:X_alpha_bar} is in fact correct and we can proceed with the solution of problem \eqref{eq problem mean field general}.\medskip

By construction, the equation $\bar{X}_T^\alpha = \bar{X}$ is satisfied.
Using the previously defined process $(\bar{X}_t^\alpha)$, we define another process $(Z_t)_{t\in [0,T]}$ for $\varphi \in \mathcal{A}^{MF}$ and $X_0^\varphi = \xi$ by 
\begin{equation*}
Z_t \coloneqq X_t^\varphi - \theta \bar{X}_t^\alpha.
\end{equation*}
Thus $(Z_t)$ can be written as
\begin{equation*}
Z_t = \xi - \theta \bar{\xi} + \sum_{k=1}^d \int_0^t \underbrace{(\varphi_k(u) - \theta \bar{\alpha}_k(u))}_{\eqqcolon \psi(u)} \dx S_k(u) \eqqcolon Z_t^\psi.
\end{equation*}

At maturity $T$, the random variable $Z^\psi_T$ coincides with the argument of the objective function in \eqref{eq problem mean field general}. Therefore we consider the auxiliary problem \eqref{eq aux problem mean field general}
\begin{equation*}
\begin{cases}
 & \sup_{\psi \in \AMF} \EE\left[U_\delta(Z^\psi_T)\right],\\
\st & Z_T^\psi = \xi - \theta \bar{\xi} + (\psi \cdot S)_T. 
\end{cases}
\end{equation*} 
If $\psi^*$ is the optimal portfolio strategy to \eqref{eq aux problem mean field general}, we can determine the solution to \eqref{eq problem mean field general} as follows. By definition we have $Z_T^{\psi^*} = X_T^\varphi - \theta \bar{X}$ or equivalently $X_T^\varphi = Z_T^{\psi^*} + \theta \bar{X}$. Moreover, the random variable $\bar{X}$ needs to satisfy the constraint 
\begin{equation*}
\bar{X} = \EE\left[X_T^\varphi | \mathcal{F}_T\right].
\end{equation*}
Hence it follows 
\begin{align*}
\bar{X} & = \EE\left[X_T^\varphi | \mathcal{F}_T\right] = \EE\left[Z^{\psi^*}_T + \theta \bar{X} \big| \mathcal{F}_T\right]= \EE\left[Z^{\psi^*}_T\big|\mathcal{F}_T\right] + \bar{X} \EE\left[\theta | \mathcal{F}_T\right]= \EE\left[Z^{\psi^*}_T\big|\mathcal{F}_T\right] + \bar{\theta} \bar{X},
\end{align*}
where we used that $\bar{X}$ is $\mathcal{F}_T$-measurable and that $\theta$ is independent of $(\mathcal{F}_t)$. Moreover, we introduced the notation $\bar{\theta} \coloneqq \EE[\theta]$. Under the assumption that $\bar{\theta} < 1$, we obtain 
\begin{equation*}
\bar{X} = \frac{1}{1-\bar{\theta}} \EE\left[Z^{\psi^*}_T\big|\mathcal{F}_T\right].
\end{equation*}
Therefore the optimal wealth $X_T^\varphi$ is given by 
\begin{equation*}
X_T^\varphi = Z^{\psi^*}_T + \theta \bar{X} = Z^{\psi^*}_T + \frac{\theta}{1-\bar{\theta}} \EE\left[Z^{\psi^*}_T|\mathcal{F}_T\right].
\end{equation*}
Since the wealth process is linear in terms of the strategy, it follows that 
\begin{equation*}
\varphi(t) = {\psi^*}(t) + \frac{\theta}{1-\bar{\theta}} \EE\left[\psi^*(t) | \mathcal{F}_T\right]
\end{equation*}
componentwise $\PP$-almost surely for all $t\in [0,T]$. Again, the line of arguments implies that there exists a unique Nash equilibrium given by \eqref{eq:NE_MF} if and only if the auxiliary problem \eqref{eq aux problem mean field general} is uniquely solvable.
\end{proof}
\begin{remark}
The Nash equilibrium \eqref{eq:NE_MF} shows, similar to Remark \ref{remark:mon}, that a larger value of $\theta$ results in a more risky investment behavior of some representative agent. If we substitute $\theta$ by a different $[0,1]$-valued random variable $\tilde{\theta}$ with $\EE[\tilde{\theta}]<1$ and $\tilde{\theta}>\theta$ $\PP$-almost surely, the resulting Nash equilibrium becomes more risky in the sense that more shares of the risky asset are purchased or sold short depending on whether the realization of $\psi_k^*(t)$ is positive or negative.
\end{remark}

\begin{example} We consider a one-dimensional Black-Scholes financial market with constant drift $\mu>0$ and volatility $\sigma>0$. Moreover, let $U_\delta(x) = -\exp\left(-\frac{1}{\delta}x\right)$, $x\in \R$. Then the solution to the auxiliary problem \eqref{eq aux problem mean field general} in terms of amounts is known to be given by 
\begin{equation*}
\pi^Z = \delta \frac{\mu}{\sigma^2}.
\end{equation*}
Therefore the mean field equilibrium to \eqref{eq problem mean field general} in terms of amounts  is given by 
\begin{equation*}
\pi = \delta \frac{\mu}{\sigma^2} + \frac{\theta}{1-\bar{\theta}} \EE \left[\delta \frac{\mu}{\sigma^2} \Big| \mathcal{F}_T^S\right] = \left(\delta + \bar{\delta} \frac{\theta}{1-\bar{\theta}}\right) \frac{\mu}{\sigma^2}.
\end{equation*}
\end{example}

\bibliographystyle{amsplain}
\bibliography{literatur}

\end{document}